    \DeclareFontFamily{U}{wncy}{}
    \DeclareFontShape{U}{wncy}{m}{n}{<->wncyr10}{}
    \DeclareSymbolFont{mcy}{U}{wncy}{m}{n}
    \DeclareMathSymbol{\Sha}{\mathord}{mcy}{"58}
\newlength{\ownl}
\newcommand{\Gal}{{\operatorname{Gal}\,}}
\newcommand{\Spec}{{\operatorname{Spec}\,}}
\newcommand{\GL}{\operatorname{GL}}
\newcommand{\SL}{\operatorname{SL}}
\newcommand{\loc}{{\operatorname{loc}}}
\newcommand{\C}{{\mathbb{C}}}
\newcommand{\F}{{\mathbb{F}}}
\newcommand{\Q}{{\mathbb{Q}}}
\newcommand{\Z}{{\mathbb{Z}}}
 \newcommand{\rhobar   }{{\overline{\rho}}}
\newcommand{\Qbar}{{\overline{\Q}}}
\def\RCS$#1: #2 ${\expandafter\def\csname RCS#1\endcsname{#2}}
\newcommand{\cL}{\mathcal{L}}
\newcommand{\rbar}{\bar{r}}
\newcommand{\Qp}{\Q_p}
\newcommand{\Qpbar}{\overline{\Q}_p}
\newcommand{\Qlbar}{\overline{\Q}_{l}}
\newcommand{\Fpbar}{\overline{\F}_p}
\newcommand{\Flbar}{\overline{\F}_l}
\newtheorem{thm}{Theorem}[subsection]
\newtheorem{cor}[thm]{Corollary}
\newtheorem{prop}[thm]{Proposition}
\newtheorem{conj}[thm]{Conjecture} \theoremstyle{definition}
\newtheorem{conjecture}[thm]{Conjecture} \theoremstyle{definition}
 \theoremstyle{remark}
\newtheorem{rem}[thm]{Remark}
\newtheorem{question}[thm]{Question} 
\numberwithin{equation}{subsection}
\theoremstyle{definition}
\begin{document}
\title[]{Slopes of modular forms} 

\author{Kevin Buzzard} \email{k.buzzard@imperial.ac.uk} \address{Department of
  Mathematics, Imperial College London}

\author{Toby Gee} \email{toby.gee@imperial.ac.uk} \address{Department of
  Mathematics, Imperial College London}\thanks{Both authors were
  supported in part by EPSRC grant EP/L025485/1. The second author
  was additionally supported by  a Leverhulme Prize, Marie Curie Career
  Integration Grant 303605, and 
  ERC Starting Grant 306326.}
\begin{abstract}
  We survey the progress (or lack thereof!) that has been made on some questions
  about the $p$-adic slopes of modular forms that were raised by the first
  author in~\cite{MR2141701}, discuss strategies for making further progress,
  and examine other related questions.
\end{abstract}

\maketitle 
\section{Introduction}\label{sec: introduction}
\subsection{}The question of the distribution of the local components of
automorphic representations at finite places has received a great deal of
attention. 

In the case of fixing an automorphic representation and varying the
finite place, we now have the recently-proved Sato--Tate
conjecture for elliptic curves over totally real
fields~\cite{MR2630056,MR2470687,MR2470688}. 
More recently, there has been
much progress on questions where the automorphic representation varies, but the
finite place is fixed; see~\cite{MR3004076}, and the references discussed in its
introduction, for a detailed history of the question. Still more recently, there
has been the fascinating work of~\cite{shin2012sato} on hybrid problems, where
both the finite place and the automorphic representation are allowed to vary,
but we will have nothing to say about this here. 

In this survey we will consider some other variants of this basic question,
including $p$-adic ones. Just as in the classical setting, there are really
several questions here, which will have different answers depending on what is
varying: for example if one fixes a weight~2 modular form corresponding to a
non-CM elliptic curve, then it is ordinary for a density one set of primes; however if one
fixes a prime and a level and considers eigenforms of all weights, then almost
none of them are ordinary (the dimension of the ordinary part remains bounded by
Hida theory as the weight gets bigger).

We will for the most part limit ourselves to the case of classical modular forms
for several reasons. The questions we consider are already interesting (and
largely completely open) in this case, and in addition, there appear to be
interesting phenomena that we do not expect to generalise in any obvious way
(see Remark~\ref{rem: should there be slope conjectures any more generally?}
below.) However, it seems worth recording a natural question (from the point of
view of the $p$-adic Langlands program) about the distribution of local
parameters as the tame level varies; for concreteness, we phrase the question
for $\GL_n$ over a CM field, but the same question could be asked in greater
generality in an obvious fashion.

Fix a CM field $F$, and consider regular algebraic essentially conjugate
self-dual cuspidal automorphic representations $\pi$ of $\GL_n/F$. Fix an
isomorphism between $\Qpbar$ and $\C$, and a place $v|p$ of $F$. Assume that
$\pi_v$ is unramified (one could instead consider $\pi_v$ lying on a particular
Bernstein component).  To such a $\pi$ is associated a Galois representation
$\rho_\pi:\Gal(\overline{F}/F)\to\GL_n(\Qpbar)$, and
$\rho_\pi|_{\Gal(\overline{F_v}/F_v)}$ is crystalline, with
Hodge--Tate weights determined by $\pi_\infty$. (See the introduction
to~\cite{MR3272052} for this result, and a discussion of the history of its proof. Thanks to the work of~\cite{harris2013rigid}
and~\cite{varmathesis}, the result is now known without the assumption of
essentially conjugate self-duality; but the cuspidal automorphic representations
of a fixed regular algebraic infinite type
which are \emph{not} essentially conjugate self-dual are expected to be rather sparse,
and in particular precise asymptotics for the number of such representations as
the level varies are unknown, 
and it therefore seems unwise to speculate about equidistribution questions for
them. Note that in the essentially conjugate self-dual case, these automorphic
representations arise via endoscopy from automorphic representations on unitary
groups which are discrete series at infinity, and can thus be counted by the
trace formula.) If we now run over~$\pi'$ of the same infinity type,
which have~$\pi'_v$
unramified, and which furthermore have $\rhobar_{\pi'}\cong\rhobar_{\pi}$ (the
bar denoting reduction to $\GL_n(\Fpbar)$), then the local representations
$\rho_{\pi'}|_{\Gal(\overline{F}_v/F_v)}$ naturally give rise to points of the
corresponding (framed) deformation ring for crystalline lifts of
$\rhobar_\pi|_{\Gal(\overline{F}_v/F_v)}$ of the given Hodge--Tate weights. The
existence of level-raising congruences mean that one can often prove that this (multi)set is 
infinite (and it is expected to always be infinite), and one could ask
whether some form of equidistribution of the
$\rho_{\pi'}|_{\Gal(\overline{F}_v/F_v)}$ holds in the rigid-analytic generic
fibre of the crystalline deformation ring.

Unfortunately, this appears to be a very hard problem. Indeed, we do not in
general even know that every irreducible component of the generic fibre
of the local deformation space contains even a
single $\rho_{\pi'}|_{\Gal(\overline{F}_v/F_v)}$; it is certainly expected that
this holds, and a positive solution would yield a huge improvement on the
existing automorphy lifting theorems (\emph{cf.}\ the introduction
to~\cite{CEGGPS}). Automorphy lifting theorems can sometimes be used to show
that if an irreducible component contains an automorphic point, then it contains
a Zariski-dense set of automorphic points, but they do not appear to be able to
say anything about $p$-adic density, or about possible equidistribution.

More generally, one could allow the weight (and, if one wishes, the level at
$p$) to vary (as well as, or instead of, allowing the level to vary) and ask
about equidistribution in the generic fibre of the full deformation ring, with no $p$-adic Hodge
theoretic conditions imposed. The points arising will necessarily lie on the
sublocus of crystalline (or more generally, if the level at $p$ varies,
potentially semistable) representations, but as these are expected to be
Zariski dense (indeed, this is known in most cases by the results
of~\cite{MR3037022} and~\cite{MR3235551}), it seems reasonable to conjecture that the points will also be
Zariski dense.

One could also consider the case of a place $v\nmid p$, where very similar
questions could be asked (except that there are no longer any $p$-adic
Hodge-theoretic conditions), and we are similarly ignorant (although the automorphy
lifting machinery can often be used to show that each irreducible component
contains an automorphic point, using the Khare--Wintenberger method~\cite[Thm.\ 3.3]{kw} and Taylor's
Ihara-avoidance result~\cite{MR2470688}; see~\cite[\S 5]{MR2785764}).

In the case of modular forms (over $\Q$) one can make all of this rather more
concrete, due to a pleasing low-dimensional coincidence: an irreducible
 two-dimensional crystalline representation of $\Gal(\Qpbar/\Qp)$ is almost
always completely determined by
its Hodge--Tate weights and the trace of the crystalline Frobenius (because
there is almost always a unique weakly admissible filtration on
the associated
filtered $\phi$-module -- see Section~\ref{subsec:centre of weight space} below). This means
that if we work with modular forms of weight $k$ and level prime to $p$, the
local $p$-adic Galois representation is almost always determined by the Hecke
eigenvalue $a_p$ (modulo the issue of semisimplicity in the ordinary case),
and the question above reduces to the question of studying the
$p$-adic behaviour of $a_p$. Such questions were studied computationally
(and independently) by Gouv\^ea and one of us (KB), for the most part in
level 1, when the era of computation of modular forms was in its infancy.
Gouv\^ea noticed (see the questions in \S2 of~\cite{MR1824885})
that in weight~$k$, the $p$-adic valuation $v_p(a_p)$ of~$a_p$ (normalised
so that $v_p(p)=1$) was almost always at most $(k-1)/(p+1)$, an observation
which at
the time did not appear to be predicted by any conjectures. Gouv\^ea
and Buzzard also noticed that $v_p(a_p)$ was almost always an integer,
an observation which even now is not particularly well-understood.
Furthermore, in level 1, the primes~$p$ for which there existed
forms with $v_p(a_p)>(k-1)/(p+1)$ seemed to \emph{coincide} with the primes
for which there existed forms with $v_p(a_p)\not\in\Z$. These led
Buzzard in \S1 of~\cite{MR2141701}
to formulate the notion of an $\SL_2(\Z)$-irregular prime, a prime
for which there exists a level~1 non-ordinary eigenform of weight
at most~$p+1$. Indeed one might even wonder whether the following are equivalent:

\begin{itemize}
\item $p$ is $\SL_2(\Z)$-irregular;
\item There exists a level 1 eigenform with $v_p(a_p)\not\in\Z$;
\item There exists a level 1 eigenform of weight~$k$
with $v_p(a_p)>(k-1)/(p+1)$.
\end{itemize}

One can check whether a given prime $p$ is $\SL_2(\Z)$-regular
or not in finite time (one just needs to compute the determinant
of the action of $T_p$ on level 1 modular forms of weight $k$
for each $k\leq p+1$ and check if it is always a $p$-adic unit; in fact
one only has to check cusp forms of weights $4\leq k\leq (p+3)/2$
because of known results
about $\theta$-cyles); one can also verify with machine computations
that the second or third
conditions hold by exhibiting an explicit eigenform with
the property in question. The authors do not know how to verify
with machine computations that the second or third conditions fail;
equivalently, how to prove for a given $p$ either that all $T_p$-eigenvalues
$a_p$ of all level~1 forms of all weights have integral $p$-adic
valuations, or that they all satisfy $v_p(a_p)\leq (k-1)/(p+1)$.
In particular it is still logically possible that for \emph{every}
prime number there will be some level~1 eigenforms
satifying $v_p(a_p)\not\in\Z$ or $v_p(a_p)>(k-1)/(p+1)$.
However this seems very unlikely -- for example $p=2$ is an $\SL_2(\Z)$-regular
prime, and the first author has
computed $v_p(a_p)$ for $p=2$ and for all $k\leq 2048$ and has found
no examples where $v_2(a_2)\not\in\Z$ or $v_2(a_2)>(k-1)/3$.
Gouvea also
made substantial calculations for all other $p<100$ which add further
weight to the idea that the conditions are equivalent.

There are precisely two $\SL_2(\Z)$-irregular primes less than 100, namely
59 and~79, and it does not appear to be known whether there are infinitely many
$\SL_2(\Z)$-regular primes or whether there are infinitely
many $\SL_2(\Z)$-irregular primes.
(However, Frank Calegari has
given~\url{https://galoisrepresentations.wordpress.com/2015/03/03/review-of-buzzard-gee/} an argument which shows that under standard conjectures about the
existence of prime values of polynomials with rational coefficients,
then there are infinitely many $\SL_2(\Z)$-irregular primes.) Note that for $p=59$ and $p=79$
eigenforms with $v_p(a_p)\not\in\Z$ and $v_p(a_p)>(k-1)/(p+1)$
do exist, but any given eigenform will typically satisfy
at most one of these conditions,
and we do not even know how to show that the second and third conditions are equivalent. 

Buzzard conjectured that for
an $\SL_2(\Z)$-regular prime, $v_p(a_p)$ was integral for all
level~1 eigenforms, and even conjectured an algorithm to compute
these valuations in all weights. Similar conjectures were made
at more general levels~$N>1$ prime to~$p$, and indeed Buzzard formulated
the notion of a $\Gamma_0(N)$-regular prime -- for $p>2$ this
is a prime $p\nmid N$ such that all eigenforms of level $\Gamma_0(N)$
and weight at most $p+1$ are ordinary, although here one has to
be a little more careful when $p=2$ (and even for $p>2$ some
care needs to be taken
when generalising this notion to $\Gamma_1(N)$ because allowing odd weights
complicates the picture somewhat; see Remark~\ref{k-odd}.)

These observations of Buzzard and Gouv\^ea can be thought of
as saying something about the behaviour of the Coleman--Mazur eigencurve near
the centre of weight space. Results of Buzzard--Kilford~\cite{MR2135280}, Roe~\cite{roe:3adic}, Wan--Xiao--Zhang~\cite{1407.0279} and Liu--Wan--Xiao~\cite{1412.2584} indicate that there is even more structure near
the boundary of weight space; 
this structure translates into
concrete assertions about $v_p(a_p)$ when $a_p$ is the $U_p$-eigenvalue
of a newform of level $\Gamma_1(Np^r)$ and character of conductor $Mp^r$
for some $M\mid N$ coprime to~$p$. We make precise conjectures
in Section~\ref{subsec:boundary of weight space}. On the other hand, perhaps these results are intimately
related to the $p$-adic Hodge-theoretic coincidence alluded to above -- that
in this low-dimensional situation there is usually only one (up to isomorphism)
weakly admissible filtration on the Weil--Deligne representation in
question. In particular such structure might not be so easily
found in a general unitary group eigenvariety.

Having formulated these conjectures, in Section~\ref{sec: my heuristic} we
discuss a potential approach to them via modularity lifting theorems.

\subsection{Acknowledgements}We would like to thank Christophe Breuil, Frank
Calegari and Matthew Emerton for many helpful conversations over the past 14 years about the subjects of this survey. We would like to thank Vytautas
Pa{\v{s}}k{\=u}nas for a helpful conversation about the Breuil--M\'ezard
conjecture, and Liang Xiao for explaining some aspects of the
papers~\cite{1407.0279} and~\cite{1412.2584} to us. We would like to thank Frank Calegari, Matthew Emerton and James
Newton for their helpful comments on an earlier draft of this paper;
in addition, we would like to thank Frank Calegari for refereeing the
paper (\url{https://galoisrepresentations.wordpress.com/2015/03/03/review-of-buzzard-gee/})
and for making several helpful comments in doing so. The second author would like to thank the organisers of the Simons
Symposium on Automorphic Forms and the Trace Formula for the invitation to speak
at the symposium, and for the suggestion of turning the rough notes for the talk
into this survey. 
\section{Limiting distributions of eigenvalues}\label{sec: limiting
  distributions} In this section we briefly review some conjectures and questions about
the limiting distributions of eigenvalues of Hecke operators in the $p$-adic
context. These questions will not be the main focus of our discussions, but as
they are perhaps the most natural analogues of the questions considered
in~\cite{shin2012sato}, it seems worth recording them.

\subsection{$\ell=p$: Conjectures of Gouv\^ea} 
The reference for this section is the paper~\cite{MR1824885}.
Fix a prime $p$, an integer
$N\ge 1$ coprime to~$p$, and consider the operator $U_p$
on the spaces of classical modular forms $S_k(\Gamma_0(Np))$ for varying
weights $k\ge 2$. 
The characteristic polynomial of $U_p$ has integer coefficients so it
makes sense to consider the slopes of the eigenvalues --
by definition, these are the $p$-adic valuations of the eigenvalues
considered as elements of $\Qpbar$. The eigenvalues themselves fall
into two categories. The ones corresponding to eigenforms
which are new at~$p$ (corresponding to Steinberg representations) have
$U_p$-eigenvalues $\pm p^{(k-2)/2}$, and thus slope $(k-2)/2$.
The other eigenvalues come in pairs, each pair being associated
to an eigenvalue of $T_p$ on $S_k(\Gamma_0(N))$, and if the
$T_p$-eigenvalue is $a_p$ (considered as an element of $\Qpbar$)
then the corresponding two $p$-oldforms have eigenvalues given by the
roots of $x^2-a_px+p^{k-1}$; so the slopes $\alpha,\beta\in[0,k-1]$ satisfy
$\alpha+\beta=k-1$. Note that
$\min\{\alpha,\beta\}=\min\{v_p(a_p),\frac{k-1}{2}\}$ by the theory of the Newton polygon, and in particular if $v_p(a_p)<\frac{k-1}{2}$
then $v_p(a_p)$ can be read off from $\alpha$ and $\beta$.

Now consider the (multi-)set of slopes of $p$-oldforms, normalised by dividing
by $k-1$ to lie
in the range $[0,1]$. More precisely we could consider the measure (a finite
sum of point measures, normalised to have total mass~1) attached to this
multiset in weight~$k$. Let $k$ tend to $\infty$ and consider how these
measures vary. Is there a limiting measure?
\begin{conj}\label{limiting distribution}
  (Gouv\^ea) The slopes converge to the measure which is uniform on
  $[0,\frac{1}{p+1}]\cup[\frac{p}{p+1},1]$ and $0$ elsewhere.
\end{conj}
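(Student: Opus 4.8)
The plan is to reduce Conjecture~\ref{limiting distribution} to an equidistribution statement about the $T_p$-slopes of level~$N$ forms, and then to attack that through the Newton polygon of the characteristic polynomial of $T_p$. Write $d=\dim S_k(\Gamma_0(N))$ and let $a_p^{(1)},\dots,a_p^{(d)}$ be the eigenvalues of $T_p$ on $S_k(\Gamma_0(N))$. Each pair of $p$-oldforms attached to $a_p^{(i)}$ contributes the two $U_p$-slopes $\alpha_i=\min\{v_p(a_p^{(i)}),\tfrac{k-1}{2}\}$ and $\beta_i=(k-1)-\alpha_i$, so the normalised slope measure $\mu_k=\frac{1}{2d}\sum_i(\delta_{\alpha_i/(k-1)}+\delta_{\beta_i/(k-1)})$ is symmetric about $\tfrac12$ and is the symmetrisation of $\nu_k=\frac1d\sum_i\delta_{\alpha_i/(k-1)}$, a measure on $[0,\tfrac12]$. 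Granting that $v_p(a_p^{(i)})\le\frac{k-1}{p+1}$ for all but $o(d)$ of the~$i$ as $k\to\infty$ (itself part of the circle of conjectures under discussion, needed here in averaged form), the clipping by $\tfrac{k-1}{2}$ is inactive on a density-one subset, and the conjecture reduces to the assertion that the normalised $T_p$-slopes $v_p(a_p^{(i)})/(k-1)$ become equidistributed for the uniform law on $[0,\tfrac1{p+1}]$.

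This equidistribution is precisely a statement about the Newton polygon of $Q_k(x)=\det\bigl(xI-T_p\mid S_k(\Gamma_0(N))\bigr)$, whose slopes listed in increasing order are the $v_p(a_p^{(i)})$. Rescaling the horizontal axis by $d$ and the vertical axis by $k-1$, the slope function of the rescaled polygon is the quantile function of $\nu_k$, so the conjecture is equivalent to the convergence of these rescaled polygons to the convex curve $u\mapsto\frac{u^2}{2(p+1)}$ on $[0,1]$ (whose slope $\frac{u}{p+1}$ is the $u$-th quantile of the uniform law on $[0,\tfrac1{p+1}]$), the $o(d)$ large-slope forms affecting only a vanishing fraction of the polygon. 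The polygon is read off from the $p$-adic valuations of the coefficients of $Q_k$, which up to sign are the elementary symmetric functions of the $a_p^{(i)}$; by Newton's identities these are universal integer polynomials in the power sums $\operatorname{Tr}(T_p^n\mid S_k(\Gamma_0(N)))$, and those in turn are recovered via the Hecke relations from the Hecke traces $\operatorname{Tr}(T_{p^j}\mid S_k(\Gamma_0(N)))$ with $j\le n$.

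The remaining step is to compute these traces by the Eichler--Selberg trace formula and to extract the limiting shape of the rescaled polygon. Eichler--Selberg expresses $\operatorname{Tr}(T_{p^n}\mid S_k(\Gamma_0(N)))$ as an elementary main term plus a weighted sum, over integers $t$ with $t^2\le 4p^n$, of Hurwitz class numbers $H(4p^n-t^2)$ times the symmetric function $\frac{\rho^{k-1}-\bar\rho^{\,k-1}}{\rho-\bar\rho}$ of the roots $\rho,\bar\rho$ of $x^2-tx+p^n$; in principle the Newton polygon should emerge from the competition between the growth in~$k$ of these class-number sums and the powers of~$p$ carried by the root symmetric functions. \emph{This is the step at which I expect the argument to collapse, and it is why the conjecture is open.} The polygon is governed by the \emph{minima} of the valuations of the coefficients, that is, by a lower convex hull, whereas the trace formula presents each coefficient only as a large alternating sum of archimedean-size terms; pinning down the $p$-adic valuation of such a sum, uniformly in both $n$ and~$k$, would require controlling cancellation among class numbers to a precision far beyond anything currently available.

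It is worth recording an alternative route, which relocates the difficulty rather than removing it. By Coleman's classicality theorem one may reinterpret $\frac1d\#\{i:v_p(a_p^{(i)})\le s\}$ as the dimension of a slope-$\le s$ subspace of overconvergent modular forms of weight~$k$, and then feed in a conjectural combinatorial formula for such dimensions of the type envisaged in the Gouv\^ea--Mazur philosophy (and refined by later ghost-series models). From any sufficiently explicit such model the uniform law on $[0,\tfrac1{p+1}]$ can be read off by elementary estimation of the predicted slopes, so this route reduces Conjecture~\ref{limiting distribution} to proving the combinatorial model correct, which is again open. Either way the genuine obstacle is the same: there is at present no unconditional handle on the individual slopes $v_p(a_p^{(i)})$ as the weight grows.
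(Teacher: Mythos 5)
The first thing to say is that the paper offers no proof of this statement against which to compare you: Conjecture~\ref{limiting distribution} is attributed to Gouv\^ea and is left entirely open, supported only by numerical evidence (the paper says only that it ``is supported by the computational evidence, which is particularly convincing in the $\Gamma_0(N)$-regular case''). So the candid admission in your proposal that the argument collapses is not a defect relative to the paper; it is the state of the art. Your preliminary reductions are correct and consistent with the paper's own setup: the pairing $\alpha_i+\beta_i=k-1$ with $\min\{\alpha_i,\beta_i\}=\min\{v_p(a_p^{(i)}),\tfrac{k-1}{2}\}$ is exactly what the paper records, the resulting symmetry of the normalised slope measure about $\tfrac12$ is right, and the reduction of the conjecture to uniform equidistribution of the $v_p(a_p^{(i)})/(k-1)$ on $[0,\tfrac{1}{p+1}]$ (modulo a density-one bound $v_p(a_p^{(i)})\le\tfrac{k-1}{p+1}$, which the paper likewise flags as empirically true but not understood) is a faithful restatement. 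One small slip: for the slope function of the rescaled Newton polygon to be the quantile function of $\nu_k$, the vertical axis must be rescaled by $d(k-1)$, not by $k-1$; this affects nothing of substance.

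Where your two routes stop is also where the paper stops. Your identification of the obstruction in the trace-formula route --- that Eichler--Selberg presents each coefficient of $\det(xI-T_p)$ as an alternating sum of archimedean-size terms whose $p$-adic valuation nobody can control --- is a fair account of why that computation has never been carried out. And your ``alternative route'' via a conjectural combinatorial model for slope-$\le s$ dimensions is essentially the paper's own suggestion: in Section~\ref{sec: KB conjectures} the paper poses, as an explicitly open question, whether Conjecture~\ref{limiting distribution} follows from Buzzard's combinatorial conjectures, calling such questions ``presumably accessible'' as pure combinatorics but not resolving them. In short, your proposal is not a proof, does not claim to be one, and no proof exists in the paper; what it gets right is that the reductions are sound and the genuine obstructions are correctly located.
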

This is supported by the computational evidence, which is particularly convincing
in the $\Gamma_0(N)$-regular case.
This conjecture implies
that if $a_p$ runs through the eigenvalues of $T_p$ on $S_k(\Gamma_0(N))$
then we ``usually'' have $v(a_p)\le (k-1)/(p+1)$. 
This appears to be the case, although the reasons why are not well
understood. If $p>2$ is $\Gamma_0(N)$-regular however,
the (purely local -- see below) main result of~\cite{MR2060368} shows
that $v(a_p)\le \lfloor (k-2)/(p-1)\rfloor$. One might hope that the main result of~\cite{MR2060368} could be strengthened
to show that in fact $v(a_p)\le\frac{k-1}{p+1}$; it seems likely that the
required local statement is true, but Berger tells us
that the proof
in~\cite{MR2060368} does not seem to extend to this more general range. (This
problem is carefully examined in Mathieu Vienney's unpublished PhD thesis.)

\subsection{$\ell\ne p$}\label{subsec:distribution of eigenvalues for l not
  p}
In the previous subsection we talked about the distribution of $a_p$,
the eigenvalues of $T_p$ on $S_k(\Gamma_0(N))$, considered as elements
of $\Qpbar$. 
The Ramanujan bounds and the Sato--Tate conjecture give us information
about the eigenvalues of $T_p$ as elements of the complex numbers. What
about the behaviour of the $a_p$ as elements of $\Qlbar$ for $\ell\not=p$
prime? We have very little idea what to expect. In this short section we
merely present a sample of some computational results concerning the even weaker
question of the distribution of the reductions of the $a_p$ as elements of
$\Flbar$. In contrast to the previous section we here vary $N$ and
keep~$k=2$ fixed. 
More precisely, we fix distinct $\ell$ and $p$, and then loop over $N\geq1$
coprime to $\ell p$ and compute the eigenvalues $\overline{a}_p$ of $T_p$
acting on $S_2(\Gamma_0(N);\Flbar)$. Here is a sample of the results
with $p=5$ and $\ell=3$, looping over the first 5,533,155 newforms:

\begin{tabular}{|c|c|}
\hline
Size of $\F_3[\overline{a}_5]$&Number\\
\hline
$3^1$&80656\\
$3^2$&38738\\
$3^3$&35880\\
$3^4$&32968\\
$3^5$&35330\\
$3^6$&33372\\
$3^7$&34601\\
$3^8$&33896\\
$3^9$&35262\\
$3^{10}$&33600\\
\hline
\end{tabular}

The first numbers in the second column of this the table are \emph{not}
decreasing, which is perhaps not what one might initially guess; Frank
Calegari observed that this could perhaps be explained by observing that
if you choose a random element of a finite field $\F_q$ then the field
it generates over $\F_p$ might be strictly smaller than $\F_q$,
and the heuristics are perhaps complicated
by this.
  \section{The Gouv\^ea--Mazur Conjecture/Buzzard's Conjectures}\label{sec: KB conjectures}
  \subsection{} Coleman theory (see Theorem~D of~\cite{MR1431135}) tells us that for a fixed prime $p$ and tame level~$N$,
there is a function $M(n)$ such
  that if $k_1,k_2>n+1$ and $k_1\equiv k_2\pmod{p^{M(n)}(p-1)}$, then the sequences of slopes
  (with multiplicities) of classical modular forms of level $Np$
and weights $k_1,k_2$ agree up to slope $n$. A more geometric
way to think about this theorem is that given a point on the eigencurve
of slope $\alpha\leq n$, there is a small neighbourhood of that point in
the eigencurve, which maps in a finite manner down to a disc in weight
space of some explicit radius $p^{-M(n)}$ and such that all the points
in the neighbourhood have slope $\alpha$.
Gouv\^ea and
  Mazur~\cite{MR1122070} conjectured that we could take $M(n)=n$; for $n=0$,
  this is a theorem of Hida (his ordinary families are finite over entire
components of weight space). Wan~\cite{MR1632794} deduced from Coleman's results
  that $M(n)$ could be taken to be quadratic (with the implicit constants
  depending on both~$p$ and~$N$; as far as we know, it is still an open problem to
  obtain a quadratic bound independent of either $p$ or $N$). However, Buzzard
  and Calegari~\cite{MR2059481} found an explicit counterexample to the
  conjecture that $M(n)=n$ always works.

On the other hand, Buzzard~\cite{MR2141701} accumulated a lot of numerical
evidence that whenever $p$ is $\Gamma_0(N)$-regular, many (but not all)
families of eigenforms seemed to have slopes  which were locally
equal to~$n$ on discs of size $p^{-L(n)}$ with $L(n)$ seemingly linear
in $\log(n)$ -- a much stronger bound than the Gouv\^ea--Mazur conjectures.
For example if $p=2$, $N=1$ then the classical slopes at weight $k=2^d$
(the largest of which is approximately $k/3$) seem to be an initial segment
of the classical slopes at weight $2^{d+1}$. For example, the 2-adic slopes
in level~1 and weight~$128=2^7$ are
$$3, 7, 13, 15, 17, 25, 29, 31, 33, 37$$
and the Gouv\^ea--Mazur conjectures would predict that 
the slopes which were at most 7 should show up in weight~$256=2^8$. However
in weight~256 the slopes are
$$3, 7, 13, 15, 17, 25, 29, 31, 33, 37, 47, 49, 51,\ldots$$
and more generally the slopes at weight equal to a power of~2 all seem
to be initial segments of the infinite slope sequence on overconvergent
2-adic forms of weight~0, a sequence explicitly computed in Corollary~1
of~\cite{MR2135279}. In particular, if one
were to restrict to $p=2$, $N=1$ and $k$ a power of~2 then $M(n)$ can
be conjecturally taken to be the base~2 logarithm of $3n$. Note
also that the counterexamples at level $\Gamma_0(N)$
to the Gouv\^ea--Mazur conjecture in~\cite{MR2059481}
were all $\Gamma_0(N)$-irregular. It may well be the case that
the Gouv\^ea--Mazur conjectures are true at level $\Gamma_0(N)$
if one restricts to $\Gamma_0(N)$-regular primes -- indeed the numerical
examples above initially seem to lend credence to the hope that something
an order of magnitude stronger than the Gouv\^ea--Mazur conjectures
might be true in the $\Gamma_0(N)$-regular case. However life is not quite
so easy -- numerical evidence seems to indicate that near to a newform for $\Gamma_0(Np)$
on the eigencurve, the behaviour of slopes seems to be broadly speaking
behaving in the same sort of way
as predicted by the Gouv\^ea--Mazur conjectures. For example, again with $p=2$
and $N=1$, computer calculations give that the slopes in weight
$38+2^8$ are
$$5, 8, 16, 18, 18, 20, 29, 32, 37, 40, 45, 50, 50, 56, 61, 64, 70,\ldots$$
whereas in weight $38+2^9$ they are
$$5, 8, 17, 18, 18, 19, 29, 32, 37, 40, 45, 50, 50, 56, 61, 64, 70,\ldots.$$
Again one sees evidence of something far stronger than the Gouv\^ea--Mazur
conjecture going on (the Gouv\^ea--Mazur conjecture only predicts equality
of slopes which are at most~8); however there seems to be a family
which has slope~16 in weight~$38+2^8$ and slope~17 in weight $38+2^9$.
This family could well be passing through a classical newform of level
$\Gamma_0(2)$
in weight~38, and newforms in weight~38 have slope $(38-2)/2=18$, so
one sees that for just this one family $M(n)$ is behaving much more like
something linear in~$n$. 

Staying in the $\Gamma_0(N)$-regular case,
Buzzard found a lot of evidence for a far more precise conjecture
than the Gouv\^ea--Mazur conjecture -- one that gives a complete
description of the slopes in the $\Gamma_0(N)$-regular case, in terms of a
recursive algorithm, which is purely combinatorial in nature and uses
nothing about modular forms at all.
Then (see~\cite[\S 3]{MR2141701} for a more detailed discussion) the
algorithm can for the most part be deduced from various heuristic
assumptions about families of $p$-adic
modular forms, for example the very strong ``logarithmic''
form of the Gouv\^ea--Mazur conjecture mentioned above, plus some heuristics
about behaviour of slopes near newforms that seem hard to justify.
Unfortunately, essentially nothing is known about these
conjectures, even in the simplest case $N=1$ and $p=2$, where the slopes
are all conjectured to be integers but even this is not known. 

In fact it does not even seem to be known that the original form of the
Gouv\^ea--Mazur conjecture (in the $\Gamma_0(N)$-regular case) is a consequence
of Buzzard's conjectures; see~\cite[Q.\ 4.11]{MR2141701}. It would also be of
interest to examine Buzzard's original data to try to formulate a precise
conjecture about the best possible value of $M(n)$ in the $\Gamma_0(N)$-regular
case. The following are
combinatorial questions, and are presumably accessible.

\begin{question} Say $p$ is $\Gamma_0(N)$-regular.
  \begin{enumerate}
  \item Does the Gouv\^ea--Mazur conjecture for $(p,N)$, or perhaps something even stronger, follow from Buzzard's conjectures? 
  \item Does Conjecture~\ref{limiting distribution} follow from Buzzard's conjectures?
  \end{enumerate}
\end{question}

One immediate consequence of Buzzard's conjectures is that in the $\Gamma_0(N)$-regular
case, all of the slopes should be integers. This can definitely fail in the $\Gamma_0(N)$-irregular case (and is a source of counterexamples to the
Gouv\^ea--Mazur conjecture), and we suspect that understanding this phenomenon
could be helpful in proving the full conjectures (see the discussion
in Section~\ref{sec: my heuristic} below). In Section~\ref{subsec:centre of weight space} we will
explain a purely local conjecture that would imply this integrality. 

Note that Lisa Clay's PhD thesis also studies this problem and makes
the observation that the combinatorial recipes seem to remain valid
when restricting to the subset of eigenforms with a fixed mod~$p$
Galois representation which is reducible locally at~$p$. 

\section{Local questions}\subsection{The centre of weight
  space}\label{subsec:centre of weight space}
In this section we discuss some purely
local conjectures and questions about $p$-adic Galois representations that are
motivated by the conjectures of Section~\ref{sec: KB conjectures}. We briefly
recall the relevant local Galois representations and their relationship to the
global picture, referring the reader to the introduction to~\cite{MR2511912} for
further details. If $k\ge 2$ and $a_p\in\Qpbar$ with $v(a_p)>0$, then there is a
two-dimensional crystalline representation $V_{k,a_p}$ with Hodge--Tate weights
$0,k-1$, with the property that the crystalline Frobenius of the corresponding
weakly admissible module has characteristic polynomial
$X^2-a_pX+p^{k-1}$. Furthermore, if $a_p^2\ne 4p^{k-1}$ then $V_{k,a_p}$ is
uniquely determined up to isomorphism. This is easily checked by directly
computing the possible Hodge filtrations on the weakly admissible module;
see for example~\cite[Prop.\ 2.4.5]{MR2642406}. This is a low-dimensional
coincidence however -- a certain parameter space of flags is connected
of dimension zero in this situation.

The relevance of this representation to the questions of Section~\ref{sec: KB
  conjectures} is that if $f\in S_k(\Gamma_0(N),\Qpbar)$ is an eigenform with $a_p^2\ne 4p^{k-1}$ (which is
expected to always hold; in the case $N=1$ it holds by Theorem 1 of~\cite{MR1824885}, and the paper \cite{MR1600034} proves
that it holds for general $N$ if $k=2$, and for general $k,N$ if one assumes the
Tate conjecture) then $\rho_f|_{\Gal(\Qpbar/\Qp)}\cong V_{k,a_p}$. 

As explained in~\cite[\S1]{MR2141701}, $p>2$ is $\Gamma_0(N)$-regular if and only if $\rhobar_f|_{\Gal(\Qpbar/\Qp)}$ is reducible for every $f\in S_k(\Gamma_0(N))$
(and every $k\ge 2$). This suggests that the problem of determining when $\overline{V}_{k,a_p}$ (the
reduction of $V_{k,a_p}$ modulo $p$) is reducible could be relevant to the
conjectures of Section~\ref{sec: KB conjectures}. To this end, we have the
following conjecture.
\begin{conj}\label{conj: integral slopes}
  If $p$ is odd, $k$ is even and $v(a_p)\notin\Z$  then
  $\overline{V}_{k,a_p}$ is irreducible.
\end{conj}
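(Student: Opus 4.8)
The plan is to prove the contrapositive: if $\overline{V}_{k,a_p}$ is reducible then $v(a_p)\in\Z$. First I would record the twist-invariance of both the hypothesis and the conclusion. Twisting $V_{k,a_p}$ by $\chi_{\mathrm{cyc}}^{-j}$ replaces the Hodge--Tate weights $\{0,k-1\}$ by $\{-j,k-1-j\}$ and $a_p$ by $a_p/p^{j}$, hence shifts $v(a_p)$ by the integer $-j$ while leaving $k-1$ (the gap between the weights) unchanged, and it does not affect reducibility of the reduction. Thus the parity hypothesis ``$k$ even'' is really the twist-invariant condition that the gap between the Hodge--Tate weights is odd, and one may normalise $v(a_p)$ to lie in $[0,1)$ if convenient. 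Since $v(a_p)>0$, the representation $V_{k,a_p}$ is \emph{irreducible} over $\Qpbar$: by the weak-admissibility analysis underlying Section~\ref{subsec:centre of weight space}, neither crystalline Frobenius eigenline is the Hodge line, so there is no weakly admissible $\phi$-stable sub-object. Consequently any reducibility of $\overline{V}_{k,a_p}$ is a genuinely mod~$p$ phenomenon and cannot be detected on a characteristic-zero subobject.

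The engine of the proof is integral $p$-adic Hodge theory. Attached to $V_{k,a_p}$ (after choosing a Galois-stable $\Zpbar$-lattice) is a Wach module, equivalently a Kisin module $\mathfrak{M}$, free of rank~$2$ over the relevant coefficient ring, on which $\phi$ acts through a matrix whose entries are built from $a_p$ and $p^{k-1}$ and whose cokernel has height $k-1$. I would reduce $\mathfrak{M}$ modulo the maximal ideal to obtain a rank-$2$ module $\overline{\mathfrak{M}}$ over $\Fpbar[[u]]$ with semilinear $\phi$ (and $\Gamma$) action, and translate ``$\overline{V}_{k,a_p}$ reducible'' into the existence, after saturation, of a $\phi$- (and $\Gamma$-) stable rank-$1$ submodule $\overline{\mathfrak{N}}\subseteq\overline{\mathfrak{M}}$. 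Such a submodule has a well-defined integer height $s\in\{0,1,\dots,k-1\}$, and the crux is to show that its presence pins one of the two Frobenius slopes of $V_{k,a_p}$ --- namely $v(a_p)$ and $k-1-v(a_p)$ --- to this integer $s$, whence $v(a_p)=s$ or $v(a_p)=k-1-s$, either of which lies in $\Z$ since $k-1\in\Z$.

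Establishing this last implication is the \textbf{main obstacle}, and it is exactly where the parity hypothesis and the genuine difficulty reside. The naive reduction of the $\phi$-matrix, in which $v(a_p)>0$ merely forces $\overline{a_p}=0$, is far too crude: to read off the reduction one must first replace $\mathfrak{M}$ by an optimally chosen lattice, applying $\phi$-equivariant row and column operations over the integral coefficient ring whose shape depends intricately on both $k$ and the precise value of $v(a_p)$. This lattice-optimisation problem governs the ``zig-zag'' behaviour of reductions and is understood only in restricted regimes: for $k\le p$ one may argue entirely within Fontaine--Laffaille theory and verify the integrality directly, and for $v(a_p)<1$ the reduction has been computed by Berger--Li--Zhu, in both cases confirming the conjecture. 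I would therefore try to isolate a combinatorial invariant of $\overline{\mathfrak{M}}$ --- an avatar of the height $s$ together with the vanishing locus of the reduced off-diagonal entry, measured against the reduction of $q=\phi(\pi)/\pi$ --- whose integrality is forced precisely when a $\phi$-stable line descends, and then to show that a non-integral slope leaves no room for such a line. The expectation, consistent with the parity hypothesis (which is exactly what makes the central slope $(k-1)/2$ non-integral), is that non-integrality of the slope obstructs every $\phi$-stable rank-$1$ sub, forcing irreducibility; making this argument uniform in $k$ is the essential unsolved step.
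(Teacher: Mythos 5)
The first thing to say is that the statement you were asked to prove is not proved in the paper at all: Conjecture~\ref{conj: integral slopes} is an open conjecture, and the paper only surveys the known partial results (the case $v(a_p)\in(0,1)$, by the main result of~\cite{MR2511912}, and the case $v(a_p)>\lfloor (k-2)/(p-1)\rfloor$, by~\cite{MR2060368}), while expressing explicit pessimism that direct $(\phi,\Gamma)$-module or $p$-adic local Langlands computations can be pushed further. Your proposal is consistent with this state of affairs, because it is a strategy rather than a proof: the crux you identify --- that a $\phi$-stable rank-one submodule of $\overline{\mathfrak{M}}$ forces one of the two Frobenius slopes $v(a_p)$, $k-1-v(a_p)$ to equal an integer height $s$ --- is nothing but the contrapositive of the conjecture rewritten in Kisin/Wach-module language, and you concede in your final paragraph that it is the ``essential unsolved step.'' So the gap is not hidden; the gap is the entire theorem, and what you have written is a plan of attack in the Berger--Li--Zhu style that the paper (see Remark~\ref{rem: known results on integrality of slopes}) already discusses and doubts can be carried out without a new structural idea.

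Beyond this, one concrete mathematical error should be flagged: your claim that ``one may normalise $v(a_p)$ to lie in $[0,1)$'' by twisting is false, and if it were true it would instantly prove the conjecture --- you yourself record that the small-slope case is known (it is due to~\cite{MR2511912}, incidentally, not to Berger--Li--Zhu, whose result~\cite{MR2060368} covers the opposite, large-slope regime), so ``normalise and cite'' would finish the argument, contradicting the fact that the conjecture is open. The reason the normalisation fails is that twisting by $\chi_{\mathrm{cyc}}^{-j}$ moves the Hodge--Tate weights to $\{-j,k-1-j\}$, i.e.\ out of the family $V_{k,\ast}$, and the only way to return to weights $\{0,k-1\}$ is to twist back by $\chi_{\mathrm{cyc}}^{j}$, which restores $a_p$. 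Put differently, $v(a_p)$ is by definition computed in the normalisation where the smaller Hodge--Tate weight is $0$, and that normalised quantity is a twist-invariant of the representation, not a coordinate that can be slid by integers; only the parity statement (that the gap $k-1$ is odd) is genuinely twist-invariant, exactly as you say. This is precisely why the known $(0,1)$ result does not bootstrap to the general case, and why the conjecture remains hard.
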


\begin{rem}
  \label{rem:this is consistent with Buzzard's conjectures}Any modular form of
  level $\Gamma_0(N)$ necessarily has even weight, and this conjecture would
  therefore imply for $p>2$
that in the $\Gamma_0(N)$-regular case, all slopes are
  integral, as Buzzard's conjectures predict (see \S3 above).\end{rem}
\begin{rem}
  \label{rem: history of the conjecture, it's false if you look more generally}
  This conjecture is arguably ``folklore'' but seems to originate
in emails between Breuil, Buzzard and Emerton in 2005.
\end{rem}
\begin{rem}\label{rem: ordinary case}The conjecture is of course false
  without the assumption that $v(a_p)\notin\Z$; indeed, if $v(a_p)=0$
  then we are in the ordinary case, and $V_{k,a_p}$ is reducible (and
  so $\overline{V}_{k,a_p}$ is certainly reducible).
\end{rem}
\begin{rem}\label{k-odd}
  If $k$ is allowed to be odd then the conjecture would be false -- for global reasons! There are $p$-newforms
  of level $\Gamma_1(N)\cap\Gamma_0(p)$ and odd weight $k_0$, which automatically have slope
  $(k_0-2)/2\notin\Z$, and in computational examples these forms give rise to both reducible and  irreducible local mod~$p$ representations. The corresponding local $p$-adic Galois representations are now
  semistable rather than crystalline, and depend on an additional parameter, the
  \emph{$\cL$-invariant}; the reduction of the Galois representation depends on
  this $\cL$-invariant in a complicated fashion, see for example the calculations
  of~\cite{MR1944572}. Considering oldforms which are sufficiently $p$-adically
  close to such  newforms (and these will exist by the theory of the eigencurve)
  produces examples of
  $V_{k,a_p}$ with $v(a_p)=(k_0-2)/2$ and $\overline{V}_{k,a_p}$ reducible. If $k$ and $k_0$ are close in weight space then $k$ will also be odd.

The main result of~\cite{MR3081546} determines, for odd~$p$, exactly for which
$a_p$ with $0<v(a_p)<1$ the representation $\overline{V}_{k,a_p}$ is
irreducible; it is necessary that $k\equiv 3\pmod{p-1}$, that $k\ge 2p+1$, and
that $v(a_p)=1/2$, and there are examples for all~$k$ satisfying these conditions.
\end{rem}
\begin{rem} If $p=2$ then the conjecture is also false for the trivial reason
  that if $k\equiv4$ mod~6 then $\overline{V}_{k,0}$ is reducible and hence
  $\overline{V}_{k,a}$ is reducible for $v(a)$ sufficiently large (whether or
  not it is integral) by the main result of~\cite{MR2060368}. In particular,
  the conjecture does not offer a local explanation for the global
  phenomenon that thousands of slopes of cusp forms have been computed for
  $N=1$ and $p=2$, and not a single non-integral one has been found (and
  the conjectures of~\cite{MR2141701} predict that the slopes will all be integral).
\end{rem}
\begin{rem}
  \label{rem: known results on integrality of slopes}Conjecture~\ref{conj:
    integral slopes} is known if $v(a_p)\in (0,1)$, which is the main result
of~\cite{MR2511912}. It is also known if $v(a_p)>\lfloor (k-2)/(p-1)\rfloor$, by
the main result of~\cite{MR2060368}. In the case that $k\le (p^2+1)/2$, it is
expected to follow from work in progress of Yamashita and Yasuda.

The result of~\cite{MR2060368} is proved by constructing an explicit family of
$(\phi,\Gamma)$-modules which are $p$-adically close to the representation
$V_{k,0}$. Since $V_{k,0}$ is induced from a Lubin--Tate character, it has
irreducible reduction if $k$ is not congruent to $1$ modulo $p+1$, and in
particular has irreducible reduction when $p>2$ and $k$ is even, which implies
the result.

In contrast, the papers~\cite{MR2511912,MR3081546} use the $p$-adic local
Langlands correspondence for $\GL_2(\Qp)$ to compute $\overline{V}_{k,a_p}$ more
or less explicitly. Despite the simplicity of the calculations
of~\cite{MR2511912}, which had originally made
us optimistic about the prospects of proving Conjecture~\ref{conj: integral
  slopes} in general, it seems that when $v(a_p)>1$ the calculations involved in
computing $\overline{V}_{k,a_p}$ are very complicated, and without having some
additional structural insight we are pessimistic that Conjecture~\ref{conj:
  integral slopes} can be directly proved by this method.
\end{rem}
In the light of the previous remark, we feel that it is unlikely that
Conjecture~\ref{conj: integral slopes} will be proved without some gaining some
further understanding of why it should be true. We therefore regard the
following question as important.

\begin{question}
  Are there any local or global reasons that we should expect
  Conjecture~\ref{conj: integral slopes} to hold, other than the 
  computational evidence of the second author discussed in~\cite{MR2141701}?
\end{question}

\begin{rem}
  \label{rem: should there be slope conjectures any more generally?}It seems
  unlikely that any analogue of Conjecture~\ref{conj: integral slopes} will hold
  in a more general setting (i.e.\ for higher-dimensional representations of
  $\Gal(\Qpbar/\Qp)$, or for representations of $\Gal(\Qpbar/F)$ of dimension
  $>1$, where $F/\Qp$ is a non-trivial extension). The reason for this is that
  there is no analogue of the fact that $V_{k,a_p}$ is completely determined by
  $k$ and $a_p$; in these more general settings, additional parameters are
  needed to describe the $p$-adic Hodge filtration, and it is highly likely that
  the reduction mod $p$ of the crystalline Galois representations will depend on
  these parameters. (Indeed, as remarked above, this already happens for
  semistable 2-dimensional representations of $\Gal(\Qpbar/\Qp)$.)

For this reason we are sceptical that there is any simple generalisation of the conjectures of
Section~\ref{sec: KB conjectures}, except to the case of Hilbert modular forms
over a totally real field in which $p$ splits completely. For example,
Table~5 in~\cite{MR2452552} and the comments below it show that non-integral slopes appear essentially immediately when one computes with $U(3)$.
\end{rem}

\subsection{The boundary of weight space.}\label{subsec:boundary of weight space}
Perhaps surprisingly, near the boundary of weight space, the combinatorics
of the eigencurve seem to become simpler. For example if $N=1$
and $p=2$ one can compare
Corollary~1 of~\cite{MR2135279} (saying that in weight~0 all overconvergent
slopes are determined by a complicated combinatorial formula) with
Theorem~B of~\cite{MR2135280} (saying that at the boundary
of weight space the slopes form an arithmetic progression). 

 Now let $f$ be a newform of weight $k\geq2$
and level $\Gamma_1(Np^r)$, with $r\geq2$, and
with character whose $p$-part $\chi$ has conductor $p^r$.
For simplicity, fix an isomorphism $\C=\Qpbar$.
Say $f$ has $U_p$-eigenvalue $\alpha$.
One checks that
the associated smooth admissible representation of $\GL_2(\Q_p)$
attached to~$f$ must be principal series associated to two characters
of $\Q_p^\times$, one
unramified (and sending $p$ to $\alpha$) and the other of conductor~$p^r$.
Now say $\rho_f$ is the
$p$-adic Galois representation attached to~$f$. 

By local-global compatibility (the main theorem
of~\cite{MR1465337}),
and the local Langlands correspondence,
the $F$-semisimplified Weil--Deligne representation associated to $\rho_f$
at~$p$ will be the direct sum of two characters, one unramified and the
other of conductor $p^r$. Moreover, the $p$-adic Hodge-theoretic
coincidence still holds: there is at most one possible weakly admissible
filtration on this Weil--Deligne representation with jumps at $0$ and $k-1$,
by Proposition 2.4.5
of~\cite{MR2642406} (or by a direct calculation). 

The resulting
weakly admissible module depends only on $k$, $\alpha$ and $\chi$, and
so we may call its associated Galois representation $V_{k,\alpha,\chi}$;
the local-global assertion is then that this is representation is
the restriction of $\rho_f$ to the absolute Galois group of $\Q_p$.
Let $\overline{V}_{k,\alpha,\chi}$ denote the semisimplification of
the mod~$p$ reduction of~$V_{k,\alpha,\chi}$.
We propose a conjecture which would go some way towards explaining the results
of~\cite{MR2135280}, \cite{roe:3adic}, \cite{MR2434162} and~\cite{MR2944968}.
We write $v_\chi$ for the $p$-adic valuation $v$ on $\Qpbar$
normalised so that the image of $v_\chi$ on $\Qp(\chi)^\times$ is~$\Z$
(so for $p>2$ we have $v_\chi(p)=1/(p-1)p^{r-2}$.)

\begin{conjecture} If $v_\chi(\alpha)\not\in\Z$ then
$\overline{V}_{k,\alpha,\chi}$ is irreducible.
\end{conjecture}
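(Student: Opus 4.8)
The plan is to adapt, to the crystabelline setting, the two strategies that are known to establish the centre-of-weight-space analogue (Conjecture~\ref{conj: integral slopes}), and to isolate precisely where the extra ramification of $\chi$ obstructs them. First I would fix the $p$-adic Hodge-theoretic bookkeeping. The representation $V_{k,\alpha,\chi}$ is crystabelline: it becomes crystalline over $\Qp(\zeta_{p^r})$, and by the uniqueness of the weakly admissible filtration recalled above it is completely determined by $(k,\alpha,\chi)$. I would record its Frobenius data, noting that the two $\phi$-eigenvalues have $v_\chi$-valuations encoded by $\alpha$ and by $p^{k-1}\alpha^{-1}$ (their product being pinned by the determinant), so that the Newton polygon is controlled by $v_\chi(\alpha)$; the target is then that $v_\chi(\alpha)\notin\Z$ forbids $\overline{V}_{k,\alpha,\chi}$ from splitting as a sum of characters of $\Gal(\Qpbar/\Qp)$. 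One should observe at the outset that there is no reduction to Conjecture~\ref{conj: integral slopes} by an unramified twist: twisting by the Galois character attached to $\chi^{-1}$ merely moves the conductor $p^r$ from one Jordan--Hölder character of the Weil--Deligne representation to the other, so the representation stays genuinely crystabelline.

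For the regime of large slope I would follow the $(\phi,\Gamma)$-module method of~\cite{MR2060368}. Here one constructs an explicit family of (Wach, or $(\phi,\Gamma)$-) modules over the appropriate Iwasawa algebra that is $p$-adically close to a reference crystabelline representation $\overline{V}_{k,\infty,\chi}$ obtained in the limit $v_\chi(\alpha)\to\infty$. This reference should be induced from a character of the Galois group of a quadratic extension (an analogue of the Lubin--Tate induction that makes $V_{k,0}$ irreducible in the crystalline case), and I would verify directly that it is irreducible under the stated hypotheses on $k$ and $\chi$. Continuity of the reduction in families would then give irreducibility of $\overline{V}_{k,\alpha,\chi}$ for $v_\chi(\alpha)$ sufficiently large, unconditionally on integrality, exactly as in the centre case.

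The remaining, genuinely hard, range of small and intermediate slope I would instead attack via the $p$-adic local Langlands correspondence for $\GL_2(\Qp)$, following the method of~\cite{MR2511912}. One realises $V_{k,\alpha,\chi}$ through local--global compatibility from the locally algebraic representation $\Sym^{k-2}\Qpbar^2\otimes\Ind_B^{\GL_2(\Qp)}(\eta_1\otimes\eta_2)$, with $\eta_1$ unramified sending $p\mapsto\alpha$ and $\eta_2$ ramified of conductor $p^r$ built from $\chi$; one then chooses a $\GL_2(\Z_p)$-stable lattice, reduces modulo $p$, and reads off $\overline{V}_{k,\alpha,\chi}$ from the Jordan--Hölder constituents of the reduction via Breuil's semisimple mod $p$ correspondence. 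The mechanism to be proved is that a non-integral slope forces a supersingular constituent, i.e.\ an irreducible Galois representation, whereas an integral slope can produce a principal-series constituent and hence reducibility. I would hope that the arithmetic-progression structure of slopes near the boundary (as in~\cite{MR2135280}) makes the relevant combinatorics unusually clean in this range.

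The hard part will be precisely the last step. As the authors already flag for the centre conjecture when $v(a_p)>1$, the integral structure of the locally algebraic representation—and hence its reduction—becomes combinatorially intricate, and the conductor-$p^r$ twist $\eta_2$ introduces additional ramified constituents with no counterpart in the crystalline case, which complicates both the construction and the reduction of the relevant $\GL_2(\Z_p)$-stable lattices. Controlling these reductions uniformly for \emph{every} non-integral value of $v_\chi(\alpha)$ is where I expect the real difficulty to lie; as the authors themselves emphasise, some structural insight beyond the existing explicit computations appears to be needed before this can be pushed through in general.
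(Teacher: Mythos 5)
You should first be aware that the paper does not prove this statement: it appears there as an open conjecture (the boundary-of-weight-space analogue of Conjecture~\ref{conj: integral slopes}), supported only by the global computations of~\cite{MR2135280}, \cite{roe:3adic}, \cite{MR2434162} and~\cite{MR2944968}, and the authors explicitly stress that the conjecture is a local assertion which does \emph{not} follow from those global results. So there is no proof of the paper's to compare yours against; the question is whether your proposal itself constitutes a proof, and it does not --- as you yourself concede in your final paragraph, the central step is left open.

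Two concrete gaps. First, your large-slope argument posits a \cite{MR2060368}-style family of $(\phi,\Gamma)$-modules converging to a reference representation ``$V_{k,\infty,\chi}$ obtained in the limit $v_\chi(\alpha)\to\infty$'', asserted to be induced from a character of a quadratic extension. No such limit point exists in the crystabelline family: the unramified character $\eta_1$ must send $p$ to a nonzero $\alpha$, and weak admissibility bounds $v(\alpha)$ above, so the family of permitted $\alpha$ has no distinguished centre. This is unlike the crystalline case, where $a_p=0$ is an interior point of the disc of permitted values and $V_{k,0}$ is genuinely induced from a Lubin--Tate character; you verify neither the existence of an analogous anchor representation, nor its irreducibility, nor any continuity statement for the reductions in the crystabelline setting. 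Second, for small and intermediate slopes you describe the $p$-adic local Langlands strategy of~\cite{MR2511912} and then write that ``the mechanism to be proved is that a non-integral slope forces a supersingular constituent'' --- but that is precisely the conjecture restated in the language of the mod~$p$ correspondence, not an argument for it. As Remark~\ref{rem: known results on integrality of slopes} of the paper explains, already in the crystalline case with $v(a_p)>1$ these lattice-reduction computations are regarded as intractable without some new structural insight, and the ramified character $\eta_2$ of conductor $p^r$ only adds constituents and complexity, as you note. What you have written is a sensible research programme, with the genuine difficulty correctly located, but no step of it is carried out, so it cannot be accepted as a proof of the statement.
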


This is a local assertion so does not follow directly from the results
in the global papers cited above. The four papers above prove that
$v_\chi(\alpha)\in\Z$ if $\alpha$ is an eigenvalue of $U_p$
on a space of modular forms of level $2^r$, $3^r$, $5^2$ and $7^2$ respectively;
note that in all these cases, all the local mod~$p$ Galois representations
which show up are reducible locally at~$p$, for global reasons. In fact,
slightly more is true in the special case $p=2$ and $r=2$: in this
case $\Q_p(\chi)=\Q_2$ so the conjecture predicts that if $v(\alpha)\not\in\Z$
then $\overline{V}_{k,\alpha,\chi}$ is irreducible; yet in~\cite{MR2135280}
it is proved that eigenforms of odd weight, level~4, and character of
conductor~4, all have slopes in $2\Z$.

It is furthermore expected that in the global setting the sequence of slopes is
a finite union of arithmetic progressions; see~\cite[Conj.\
1.1]{1407.0279}. Indeed,  a version of this statement
(sufficiently close to the boundary of weight space, in the
setting of the eigenvariety for a definite quaternion algebra
with $p>2$) is proved by Liu--Wan--Xiao in~\cite{1412.2584}.
\section{A strategy to prove Buzzard's conjectures}\label{sec: my
  heuristic}\subsection{}The following strategy for attacking the
conjectures of Section~\ref{sec: KB conjectures} was explained by the second
author to the first author in 2005, and was the motivation for the research
reported on in the papers~\cite{MR2511912,MR3081546} (which we had originally
hoped would result in a proof of Conjecture~\ref{conj: integral slopes}). 

Assume that $p>2$, and fix a continuous odd, irreducible (and thus modular, by
Serre's conjecture), representation
$\rhobar:\Gal(\Qbar/\Q)\to\GL_2(\Fpbar)$. Assume further that $\rhobar$
satisfies the usual Taylor--Wiles condition that
$\rhobar|_{\Gal(\Qbar/\Q(\zeta_p))}$ is irreducible. 

Let $R_{k}^\loc(\rhobar)$ be the (reduced and $p$-torsion free)
universal framed deformation ring for lifts of $\rhobar|_{\Gal(\Qpbar/\Qp)}$
which are crystalline with Hodge--Tate weights $0,k-1$. This connects to the global setting
via the following consequence of the results of~\cite{MR2505297}.

\begin{prop}
  \label{prop: existence of modular lifts hitting a particular
    component}Maintain the assumptions and notation of the previous two
  paragraphs, so that $p>2$, and $\rhobar:\Gal(\Qbar/\Q)\to\GL_2(\Fpbar)$ is a
  continuous, odd, irreducible representation with $\rhobar|_{\Gal(\Qbar/\Q(\zeta_p))}$ irreducible.

Let $N$ be an integer not divisible by $p$ such that $\rhobar$ is modular of level
  $\Gamma_1(N)$. If $p=3$, assume further that $\rhobar|_{\Gal(\Qpbar/\Qp)}$ is not a
  twist of the direct sum of the mod $p$ cyclotomic character and the trivial
  character. Fix an irreducible component of $\Spec
  R_{k}^\loc(\rhobar)[1/p]$. Then there is a newform $f\in
  S_k(\Gamma_1(N),\Qpbar)$ such that $\rhobar_f\cong\rhobar$, and
  $\rho_f|_{\Gal(\Qpbar/\Qp)}$ corresponds to a point of $\Spec
  R_{k}^\loc(\rhobar)[1/p]$ lying on our chosen component.
\end{prop}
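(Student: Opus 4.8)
The plan is to deduce the statement from the Taylor--Wiles--Kisin patching method as carried out in~\cite{MR2505297}, the essential input being that the patched module has \emph{full} support. Throughout I write $\CO$ for the ring of integers in a sufficiently large finite extension of $\Qp$, with uniformizer $\varpi$, and $\m$ for the maximal ideal of the relevant Hecke algebra cut out by $\rhobar$.

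First I would set up the global deformation problem for $\rhobar$: deformations unramified outside $Np$, of the prescribed type at the primes dividing $N$, and crystalline with Hodge--Tate weights $0,k-1$ at $p$. Since $\rhobar|_{\Gal(\Qbar/\Q(\zeta_p))}$ is irreducible, the Taylor--Wiles--Kisin method applies, and (choosing auxiliary level structure and a cofinal system of sets of Taylor--Wiles primes) produces a patched module $M_\infty$ over a framed local ring $R_\infty=R_k^\loc(\rhobar)[[x_1,\dots,x_g]]$, together with a regular sequence $\mathfrak{a}=(y_1,\dots,y_{g'})$ such that $R_\infty/\mathfrak{a}$ is (a power-series ring over) the global deformation ring $R$, and $M_\infty/\mathfrak{a}M_\infty$ is the $\m$-localized, $\varpi$-adically completed space of weight-$k$ level-$\Gamma_1(N)$ modular forms. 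The formal properties I need are that $M_\infty$ is maximal Cohen--Macaulay over the equidimensional ring $R_\infty$ with $\dim M_\infty=\dim R_\infty$, and that the $\Qpbar$-points of $\operatorname{Supp}(M_\infty)$ lying over $V(\mathfrak{a})$ are exactly the newforms $f$ with $\rhobar_f\cong\rhobar$, the point of $\Spec R_k^\loc(\rhobar)[1/p]$ recorded by $\rho_f|_{\Gal(\Qpbar/\Qp)}$ being the image of that automorphic point.

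The heart of the argument is to show that $M_\infty$ is supported on \emph{every} irreducible component of $\Spec R_\infty[1/p]$. Maximal Cohen--Macaulayness forces $\operatorname{Supp}M_\infty$ to be a union of irreducible components, and adjoining the formal variables $x_i$ gives a bijection between the components of $\Spec R_\infty[1/p]$ and those of $\Spec R_k^\loc(\rhobar)[1/p]$, so it suffices to check that no component is missed. This is precisely the content of the Breuil--M\'ezard conjecture: it identifies the Hilbert--Samuel multiplicity of $M_\infty/\varpi M_\infty$ along each component with a product of a geometric multiplicity, positive on every component, and an automorphic multiplicity, nonzero because $\rhobar$ is modular. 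Positivity on each component forces $\operatorname{Supp}M_\infty[1/p]$ to be all of $\Spec R_\infty[1/p]$. This is the step supplied by~\cite{MR2505297} via the $p$-adic local Langlands correspondence for $\GL_2(\Qp)$, and it is exactly here that the excluded $p=3$ configuration --- $\rhobar|_{\Gal(\Qpbar/\Qp)}$ a twist of $\omega\oplus 1$ --- intervenes, as that is the case left untreated by this method.

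Granting full support, I would conclude by a dimension count. Fix a component $C$ of $\Spec R_k^\loc(\rhobar)[1/p]$ and let $\tilde{C}$ be the corresponding component of $\Spec R_\infty[1/p]$, so that $M_\infty$ is supported on $\tilde{C}$ and $\dim\tilde{C}=\dim R_\infty$. Because $\mathfrak{a}$ is a regular sequence on the maximal Cohen--Macaulay module $M_\infty$, one has $\operatorname{Supp}(M_\infty/\mathfrak{a}M_\infty)=\operatorname{Supp}(M_\infty)\cap V(\mathfrak{a})$, and the numerical coincidence underlying Cohen--Macaulayness gives $\dim(\tilde{C}\cap V(\mathfrak{a}))\ge \dim R_\infty-g'=\dim R\ge 1$. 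Hence $\tilde{C}\cap V(\mathfrak{a})$ is not supported in the special fibre, so after inverting $p$ it contains a $\Qpbar$-point. Such a point lies over $V(\mathfrak{a})$ in $\operatorname{Supp}M_\infty$, so it is a newform $f\in S_k(\Gamma_1(N),\Qpbar)$ with $\rhobar_f\cong\rhobar$, and by construction $\rho_f|_{\Gal(\Qpbar/\Qp)}$ defines a point of $\Spec R_k^\loc(\rhobar)[1/p]$ on $C$, as required. The genuine obstacle is the full-support statement of the third paragraph, i.e.\ the Breuil--M\'ezard conjecture in this weight; the patching construction and the final dimension bookkeeping are routine by comparison, and it is the Breuil--M\'ezard input that dictates both the Taylor--Wiles hypothesis and the $p=3$ exclusion in the proposition.
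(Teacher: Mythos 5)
Your proposal follows essentially the same route as the paper: the paper's proof is a citation of~\cite{MR2505297}, carried out exactly as in~\cite[Prop.\ 3.7]{MR2869026}, and what you have written is a correct unpacking of that machinery --- Taylor--Wiles--Kisin patching, full support of the patched module via the Breuil--M\'ezard conjecture, and the Cohen--Macaulay dimension count (together with $\varpi$-torsion-freeness of $M_\infty/\mathfrak{a}M_\infty$) producing a characteristic-zero automorphic point on each component.

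There is, however, one genuine inaccuracy, and it concerns precisely the point that the paper's proof is at pains to address. You assert that the Breuil--M\'ezard input is ``supplied by~\cite{MR2505297}'' and that the only case left untreated by that method is the $p=3$ configuration excluded in the statement. In fact~\cite{MR2505297} proves the Breuil--M\'ezard conjecture only when $\rhobar|_{\Gal(\Qpbar/\Qp)}$ is not a twist of \emph{any} extension --- split or non-split --- of the trivial character by the mod $p$ cyclotomic character, so your argument as written establishes the proposition only under that stronger hypothesis. To obtain the statement with only the $p=3$ split case excluded, one needs the subsequent proofs of the conjecture: \cite{paskunasBM} for twists of non-split extensions (all $p\ge 3$) and~\cite{hu2013breuil} for twists of the split extension when $p>3$. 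With those citations supplied, the rest of your argument goes through and coincides with the paper's proof.
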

\begin{proof}
  This follows almost immediately from the results of~\cite{MR2505297}, exactly
  as in the proof of~\cite[Prop.\ 3.7]{MR2869026}.  (Note that the condition
  that $f$ is a newform of level $\Gamma_1(N)$ can be expressed in terms of the
  conductor of $\rho_f$, and thus in terms of the components of the local
  deformation rings at primes dividing $N$.)

  More precisely, this argument immediately gives the result in the case that
  $\rhobar_f|_{\Gal(\Qpbar/\Qp)}$ is not a twist of an extension of the trivial
  representation by the mod $p$ cyclotomic character. However, this assumption
  on~$\rhobar_f|_{\Gal(\Qpbar/\Qp)}$ is needed only in the proof of~\cite[Cor.\
  2.2.17]{MR2505297}, where this assumption guarantees that the Breuil--M\'ezard
  conjecture holds for $\rhobar_f|_{\Gal(\Qpbar/\Qp)}$ (indeed, the
  Breuil--M\'ezard conjecture is proved under this assumption
  in~\cite{MR2505297}). The Breuil--M\'ezard conjecture is now known
  for $p>2$, except in the case that $p=3$ and $\rhobar|_{\Gal(\Qpbar/\Qp)}$ is
  a twist of the direct sum of the mod $p$ cyclotomic character and the trivial
  character, so the result follows. (The case that $p\ge 3$ and $\rhobar|_{\Gal(\Qpbar/\Qp)}$ is a
  twist of a non-split extension of the trivial character by the mod $p$
  cyclotomic character is treated in~\cite{paskunasBM}, and the case that $p>3$
  and $\rhobar|_{\Gal(\Qpbar/\Qp)}$ is a twist of the direct sum of the mod $p$
  cyclotomic character and the trivial character is proved
  in~\cite{hu2013breuil}.)
\end{proof}
Suppose that $\rhobar|_{\Gal(\Qpbar/\Qp)}$ is reducible, and that
Conjecture~\ref{conj: integral slopes} holds. Consider $a_p$ as a
rigid-analytic function on $\Spec R_{k}^\loc(\rhobar)[1/p]$; since $v(a_p)\in\Z$ by assumption, we
see that $v(a_p)$ is in fact constant on connected (equivalently, irreducible)
components of $\Spec R_{k}^\loc(\rhobar)[1/p]$. 
\begin{cor}
  \label{cor: set of slopes up to multiplicity is determined locally, if they're
    integers}Maintain the assumptions of Proposition~\ref{prop: existence of
    modular lifts hitting a particular component}, and assume further that $\rhobar|_{\Gal(\Qpbar/\Qp)}$ is reducible. Assume Conjecture~\ref{conj:
    integral slopes}. Then the set of slopes (without multiplicities) of $T_p$
  on newforms $f\in S_k(\Gamma_1(N),\Qpbar)$ with $\rhobar_f\cong\rhobar$ is
  determined purely by $k$ and $\rhobar|_{\Gal(\Qpbar/\Qp)}$; more precisely, it
  is the set of slopes of (the crystalline Frobenius of the Galois representations corresponding to)
  components of~$\Spec R_{k}^\loc(\rhobar)[1/p]$.
\end{cor}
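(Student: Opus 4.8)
The plan is to prove the two sets are equal by establishing two inclusions, after first making sense of ``the slope of a component''. Throughout I identify a newform $f\in S_k(\Gamma_1(N),\Qpbar)$ with $\rhobar_f\cong\rhobar$ and $T_p$-eigenvalue $a_p$ with the $\Qpbar$-point of $\Spec R_k^\loc(\rhobar)[1/p]$ determined by $\rho_f|_{\Gal(\Qpbar/\Qp)}\cong V_{k,a_p}$ (here $a_p^2\ne 4p^{k-1}$, so $V_{k,a_p}$ is well-defined up to isomorphism, as recalled in Section~\ref{subsec:centre of weight space}). This point is a crystalline lift of $\rhobar|_{\Gal(\Qpbar/\Qp)}$ with Hodge--Tate weights $0,k-1$, and under the identification the slope of $f$ is exactly $v(a_p)$, the valuation of the trace of crystalline Frobenius at that point.

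First I would show that the slope is constant on components. View $a_p$ as a rigid-analytic function on $\Spec R_k^\loc(\rhobar)[1/p]$, namely the trace of crystalline Frobenius on the universal crystalline lift. Since $\rhobar|_{\Gal(\Qpbar/\Qp)}$ is reducible, at every point the reduction $\overline{V}_{k,a_p}$ equals the (reducible) semisimplification of $\rhobar|_{\Gal(\Qpbar/\Qp)}$; hence by the contrapositive of Conjecture~\ref{conj: integral slopes} (applicable since $p$ is odd and $k$ is even) we get $v(a_p)\in\Z$ at every point, and in particular $a_p$ is nowhere zero, so $v(a_p)$ is a genuine continuous $\Z$-valued function. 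A continuous $\Z$-valued function on a connected rigid space is constant: for each $n$ the loci $\{v(a_p)\ge n\}$ and $\{v(a_p)\le n\}$ are affinoid subdomains, so the fibres $\{v(a_p)=n\}$ are admissible opens partitioning the space, and connectedness forces all but one to be empty. Thus $v(a_p)$ is constant on each connected, hence on each irreducible, component of $\Spec R_k^\loc(\rhobar)[1/p]$, and we may unambiguously call this common value the slope of that component.

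With this in hand the two inclusions are immediate. For the inclusion of newform slopes into component slopes, every such $f$ defines a point lying on some irreducible component, and its slope $v(a_p)$ equals the slope of that component by the previous paragraph. For the reverse inclusion I would invoke Proposition~\ref{prop: existence of modular lifts hitting a particular component} directly: given any irreducible component of $\Spec R_k^\loc(\rhobar)[1/p]$, it produces a newform $f\in S_k(\Gamma_1(N),\Qpbar)$ with $\rhobar_f\cong\rhobar$ whose associated point lies on that component, so the slope of the component is realised as the slope $v(a_p)$ of an actual newform. Combining the inclusions yields equality; and since $R_k^\loc(\rhobar)$, hence its generic fibre and the slope function on it, depends only on $k$ and $\rhobar|_{\Gal(\Qpbar/\Qp)}$, the resulting set of slopes is determined by this data alone.

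The genuinely substantive inputs are entirely contained in Proposition~\ref{prop: existence of modular lifts hitting a particular component} (which supplies the hard direction, that every component carries an automorphic point) and in Conjecture~\ref{conj: integral slopes} (which is what makes the slope \emph{locally constant} rather than merely a varying rigid-analytic invariant). Granting these, the corollary is essentially formal. The one step requiring any care --- and the step I would expect to be the main obstacle were the conjecture not assumed --- is precisely the constancy of $v(a_p)$ on components: without the integrality conjecture a rigid-analytic function's valuation can vary continuously within a connected component, so the very notion of ``the slope of a component'' would be ill-defined and the statement would fail.
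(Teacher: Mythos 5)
Your proof is correct and is essentially the paper's own argument: the paper likewise views $a_p$ as a rigid-analytic function on $\Spec R_{k}^\loc(\rhobar)[1/p]$, uses Conjecture~\ref{conj: integral slopes} together with the local reducibility of $\rhobar|_{\Gal(\Qpbar/\Qp)}$ to force $v(a_p)\in\Z$ and hence constancy on connected (equivalently irreducible) components, and then obtains both inclusions from Proposition~\ref{prop: existence of modular lifts hitting a particular component}. Your write-up merely makes explicit two points the paper leaves implicit, namely the rigid-geometric reason an integer-valued valuation is constant on a connected component, and the parity assumption on $k$ needed to invoke the conjecture (an assumption the paper's one-line proof also uses tacitly).
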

\begin{proof}
  This is immediate from Proposition~\ref{prop: existence of modular lifts
    hitting a particular component} (and the discussion in the 
  preceding paragraph).
\end{proof}
\begin{rem}
  \label{rem: things are more complicated in the reducible case}The
  conclusion of Corollary~\ref{cor: set of slopes up to multiplicity is determined locally, if they're
    integers} seems unlikely to hold if~$\rhobar$ is allowed to be
  (globally) reducible; for example, if~$p=2$, it is known that the slopes of all
  cusp forms for~$\SL_2(\Z)$ are at least~$3$, but there are local
  crystalline representations of slope~$1$ (for example the local 2-adic
representation attached to the unique weight~6 level~3 cuspidal eigenform).
We do not know if there is
  any reasonable ``local to global principle'' when~$\rhobar$ is reducible.
\end{rem}

It would be very interesting to be able to have some control on the
multiplicities with which slopes occur in~$S_k(\Gamma_1(N),\Qpbar)$ (for
example, to show that these multiplicities agree for two weights which are
sufficiently $p$-adically close, as predicted by the Gouv\^ea--Mazur
conjecture), but it is not clear to us how such results could be extracted from
the modularity lifting machinery. If all the irreducible components
of~$R_{k}^\loc(\rhobar)$ were regular, it would presumably be possible to use
the argument of Diamond~\cite{MR1440309} to relate the multiplicities of the same slope
in different weights, but we do not expect this to hold in any generality.

Not withstanding this difficulty, one could still hope to prove the conjectures of~\cite{MR2141701}
up to multiplicity. If Conjecture~\ref{conj: integral slopes} were
known, the main obstruction to doing this would be obtaining
a strong local constancy result for slopes as $k$ varies $p$-adically. More
precisely, we would like to prove the following purely local conjecture for some function
$M(n)$ as in~\S\ref{sec: KB conjectures} above.
\begin{conj}\label{conj: local slope constancy}Let $\rbar:\Gal(\Qpbar/\Qp)\to\GL_2(\Fpbar)$ be reducible.
  If $n\ge 0$ is an integer, and $k,k'\ge n+1$ have $k\equiv
  k'\pmod{(p-1)p^{M(n)}}$, then there is a crystalline lift of $\rbar$ with
Hodge--Tate weights $0,k-1$ and slope $n$ if and only if there is a crystalline
lift of $\rbar$ with Hodge--Tate weights $0,k'-1$ and
slope $n$.\end{conj}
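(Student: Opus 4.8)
The plan is to translate the statement into one about the reductions of the crystalline representations $V_{k,a_p}$ of Section~\ref{subsec:centre of weight space} and then to deduce it from local constancy of the reduction \emph{in the weight}. Recall that, provided $a_p^2\ne 4p^{k-1}$, every two-dimensional crystalline representation of $\Gal(\Qpbar/\Qp)$ with Hodge--Tate weights $0,k-1$ is a $V_{k,a_p}$, and that the trace $a_p$ of crystalline Frobenius is an isomorphism invariant of the representation whose valuation is, by definition, its slope. First I would record the resulting dictionary: writing $\overline{V}_{k,a_p}$ for the semisimplification of the reduction, and using that $\rbar$ is reducible, a crystalline lift of $\rbar$ with Hodge--Tate weights $0,k-1$ and slope $n$ exists if and only if there is an $a_p$ with $v(a_p)=n$ and $\overline{V}_{k,a_p}\cong\rbar$. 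The only weight at which this dictionary degenerates is the single one at which $a_p^2=4p^{k-1}$ can hold (which determines $k$ in terms of $n$); since $k\equiv k'\pmod{(p-1)p^{M(n)}}$ forces $k$ and $k'$ far apart once $M(n)$ is large, at most one of them is this weight, and it can be treated separately.

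The heart of the matter is then the following purely local claim, with $a_p$ held \emph{fixed}: if $v(a_p)=n$ and $k\equiv k'\pmod{(p-1)p^{M(n)}}$, then $\overline{V}_{k,a_p}\cong\overline{V}_{k',a_p}$. Granting this, the conjecture is immediate, since the \emph{same} $a_p$ then witnesses a slope-$n$ lift in weight $k'$ as soon as it witnesses one in weight $k$, the valuation $v(a_p)=n$ being unchanged; and the roles of $k,k'$ are symmetric. To prove the claim I would use Berger's local constancy of the reduction of two-dimensional crystalline representations, made concrete through Wach modules: for fixed $a_p$ the Wach module of $V_{k,a_p}$ varies $p$-adically continuously with $k$, so that $V_{k,a_p}$ and $V_{k',a_p}$ have Wach modules that are congruent modulo $p$ once $k,k'$ are sufficiently $p$-adically close, and congruent Wach modules visibly have isomorphic reductions.

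The main obstacle is to make the radius of local constancy depend on the slope $n$ \emph{alone}, uniformly in the weight $k$. The available local constancy theorems do produce, for each fixed $k$, \emph{some} admissible modulus $(p-1)p^{m}$; the difficulty is that the known bounds on $m$ grow with $k$, whereas the conjecture demands a single $M(n)$ valid for all $k$. Extracting such a bound amounts to controlling $\overline{V}_{k,a_p}$ in the regime $k\to\infty$ with $v(a_p)=n$ held fixed, and, as emphasised in Remark~\ref{rem: known results on integrality of slopes}, this is exactly the regime in which the $p$-adic local Langlands computations of~\cite{MR2511912} and~\cite{MR3081546} become intractable once $v(a_p)>1$. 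A direct attack therefore seems to require genuinely new structural input rather than a finer version of the existing calculations.

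A more promising route may be to argue in families. One would view the $V_{k,a_p}$, as $k$ ranges over a fixed residue disc in weight space and $a_p$ over the circle $v(a_p)=n$, as an analytic family of crystalline representations, and try to show that the locus where the reduction is a fixed $\rbar$ is cut out by conditions of explicitly bounded $p$-adic depth in $k$; equivalently, one would study how the slope-$n$ components of the crystalline deformation rings $R_{k}^{\loc}(\rbar)$ of Section~\ref{sec: my heuristic} deform as $k$ moves within its residue class. This is the local shadow of Coleman's weight local constancy for the eigencurve (Theorem~D of~\cite{MR1431135}), where the radius $p^{-M(n)}$ really does depend only on the slope $n$; but that argument is global and passes through overconvergent families, and the crux is to internalise it as an intrinsic statement about these local deformation rings, with no appeal to modularity.
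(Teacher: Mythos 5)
You have not proved this statement, and in fact neither does the paper: Conjecture~\ref{conj: local slope constancy} is stated there as an open conjecture, with no proof offered. Your proposal correctly translates the problem into the claim that, for fixed $a_p$ with $v(a_p)=n$, the reduction $\overline{V}_{k,a_p}$ is constant as $k$ moves in a residue disc of radius $p^{-M(n)}$ with $M(n)$ depending \emph{only} on $n$. But this claim is not a lemma you can quote or establish: it \emph{is} the conjecture (indeed a strengthening of it, since you demand the same $a_p$ in both weights and make no use of the reducibility of $\rbar$). The tool you invoke --- Berger's Wach-module local constancy in the weight, i.e.\ the methods of~\cite{MR2966990} --- produces, for each fixed pair $(k,a_p)$, \emph{some} admissible radius $m(k,a_p)$, with no uniformity as $k\to\infty$ along $v(a_p)=n$; this is precisely why the paper says those methods should yield only a weak version of the conjecture ``with a much worse bound on $M(n)$.'' Your own third paragraph concedes this, and the family-theoretic programme sketched in your final paragraph (internalising Coleman's Theorem~D of~\cite{MR1431135} as a statement about the rings $R_k^{\loc}(\rbar)$ without appeal to modularity) is a description of what one would like to be true, not an argument. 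So the proposal reduces the conjecture to itself and stops exactly where the difficulty begins.

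There are also two fixable but genuine defects in your dictionary. First, a crystalline representation with Hodge--Tate weights $0,k-1$ need not be a $V_{k,a_p}$ on the nose: its determinant is $\chi^{k-1}$ times an unramified character, so the general object is an unramified twist of some $V_{k,a_p}$; since such twists preserve the slope but twist the reduction, you must either fix determinants or replace $\rbar$ by its orbit under unramified twisting. Second, $\overline{V}_{k,a_p}$ denotes the \emph{semisimplified} reduction, whereas a crystalline lift of a non-split reducible $\rbar$ should mean a representation admitting a lattice whose genuine reduction is $\rbar$; the existence of such a lattice is a strictly finer condition than $\overline{V}_{k,a_p}\cong\rbar^{\semis}$, so your ``if and only if'' needs justification (which extension classes arise as reductions of lattices must be controlled). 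Finally, be cautious about having discarded the hypothesis that $\rbar$ is reducible: the counterexamples of~\cite{MR2059481} to the Gouv\^ea--Mazur conjecture occur at primes where the relevant $\rhobar$ is locally irreducible, which is exactly why the paper builds that hypothesis into the conjecture; a same-$a_p$ constancy statement uniform in $k$ and valid without reducibility is, if anything, further out of reach than the conjecture itself.
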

It might well be possible to prove a weak result in the direction of
Conjecture~\ref{conj: local slope constancy} by the methods of~\cite{MR2966990}
(more precisely, to prove the conjecture with a much worse bound on $M(n)$ than
would be needed for interesting applications to the conjectures of~\cite{MR2141701}, but
without any assumption on the reducibility of~ $\rbar$).

Corollary~\ref{cor: set of slopes up to multiplicity is determined locally, if they're
    integers} (which shows, granting as always Conjecture~\ref{conj: integral
    slopes}, that the set of slopes which occur globally is the same as the set
  of slopes that occur locally) shows that it would be enough to prove the global version of
  this statement, and it is possible that the methods of~\cite{MR1632794} could allow one to deduce
  a local constancy result where the dependence on $n$ in ``sufficiently close'' is
  quadratic in $n$. (Note that while it is not immediately clear how to
  adapt the methods of~\cite{MR1632794} to allow $\rhobar$ to be fixed, it seems
  plausible that the methods used to prove~\cite[Thm.\ D]{1407.0279} will be able
  to do this.) Note
  again that the computations of~\cite{MR2059481}
(which in particular disprove the original Gouv\^ea--Mazur conjecture) mean that
we cannot expect to deduce Conjecture~\ref{conj: local slope constancy} (for an optimal
function $M(n)$ of the kind suggested by Buzzard's conjectures) from any
global result that does not use the hypothesis that
$\rhobar|_{\Gal(\Qpbar/\Qp)}$ is reducible.

However, it seems plausible to us that a weak local constancy result of this kind, also valid in the case
that $\rhobar|_{\Gal(\Qpbar/\Qp)}$ is irreducible, could be bootstrapped to give
the required strong constancy, provided that Conjecture~\ref{conj: integral
  slopes} is proved. The idea is as follows: under the assumption of
Conjecture~\ref{conj: integral slopes}, $v(a_p)$ is constrained to be an integer
when $\rhobar|_{\Gal(\Qpbar/\Qp)}$ is reducible. If one could prove a result
(with no hypothesis on the reducibility of $\rhobar|_{\Gal(\Qpbar/\Qp)}$)
saying that if $k,k'$ are sufficiently close in weight space, then the small
slopes of crystalline lifts of $\rhobar|_{\Gal(\Qpbar/\Qp)}$ of Hodge--Tate
weights $0,k-1$ and $0,k'-1$ are also close, then the fact that the slopes are
constrained to be integers could then be used to deduce that the slopes are
equal (because two integers which differ by less than $1$ must be equal.)

\bibliographystyle{amsalpha}
\bibliography{slopes}
\end{document}